\documentclass[smallextended,envcountsect]{my_svjour3}


\usepackage{amsmath,amscd,amsfonts,amssymb,enumerate}
\usepackage{graphicx}
\usepackage{subfig}

\usepackage{a4wide}

\usepackage[misc,geometry]{ifsym}


\begin{document}

\title{A Simple Accurate Method for Solving Fractional Variational and Optimal Control Problems}

\titlerunning{Solving Fractional Variational and Optimal Control Problems}

\author{Salman Jahanshahi \and Delfim F. M. Torres}

\authorrunning{S. Jahanshahi \and D. F. M. Torres}

\institute{S. Jahanshahi \at
Department of Mathematics, Science and Research Branch,\\
Islamic Azad University, Tehran, Iran\\
\email{s.jahanshahi@iausr.ac.ir}
\and
D. F. M. Torres \Letter \at
Center for Research and Development in Mathematics and Applications (CIDMA),\\
Department of Mathematics, University of Aveiro, 3810-193 Aveiro, Portugal\\
\email{delfim@ua.pt}
}

\date{Submitted: 31-Dec-2014 / Revised: 21-Oct-2015 and 17-Jan-2016 / Accepted: 24-Jan-2016.}

\maketitle


\begin{abstract}
We develop a simple and accurate method to solve fractional variational
and fractional optimal control problems with dependence on Caputo
and Riemann--Liouville operators. Using known formulas for computing
fractional derivatives of polynomials, we rewrite the fractional
functional dynamical optimization problem as a classical static
optimization problem. The method for classical optimal control problems
is called Ritz's method. Examples show that the proposed
approach is more accurate than recent methods available in the literature.

\keywords{fractional integrals \and fractional derivatives \and Ritz's method
\and fractional variational problems \and fractional optimal control}

\subclass{26A33 \and 49K05 \and 49M30}
\end{abstract}


\section{Introduction}
\label{sec1}

In view of its history, fractional (non-integer order) calculus
is as old as the classical calculus \cite{MR2218073,MR1347689,MR3181071}.
Roughly speaking, there is just one definition of fractional integral operator,
which is the Riemann--Liouville fractional integral. There are, however, several
definitions of fractional differentiation, e.g., Caputo,
Riemann--Liouville, Hadamard, and Wily differentiation \cite{MR3224387,MR2960307}.
Each type of fractional derivative has its own properties,
which make richer the area of fractional calculus and enlarge
its range of applicability \cite{MR2768178,MR3188372}.

There are two recent research areas where fractional operators
have a particularly important role: the fractional calculus of variations
and the fractional theory of optimal control \cite{sal34,book:adv:FCV,MR2984893}.
A fractional variational problem is a dynamic optimization problem,
in which the objective functional, as well as its constraints, depends on derivatives or integrals
of fractional order, e.g., Caputo, Riemann--Liouville or Hadamard fractional operators.
This is a generalization of the classical theory, where derivatives and integrals can only appear
in integer orders. If at least one non-integer (fractional) term exists in its formulation,
then the problem is said to be a fractional variational problem
or a fractional optimal control problem.
The theory of the fractional calculus of variations
was introduced by Riewe in 1996, to deal with nonconservative systems in mechanics
\cite{sal1,sal2}. This subject has many applications in physics and engineering,
and provides more accurate models of physical phenomena. For this reason,
it is now under strong development: see \cite{sal5,sal3,sal4,MR3103208,MR3162654,MR3200762}
and the references therein. For a survey, see \cite{MR3221831}.

There are two main approaches to solve problems of the fractional calculus of variations or optimal control.
One involves solving fractional Euler--Lagrange equations or fractional Pontryagin-type conditions,
which is the indirect approach; the other involves addressing directly the problem,
without involving necessary optimality conditions, which is the direct approach.
The emphasis in the literature has been put on indirect methods \cite{book:adv:FCV,MR2984893,sal6,sal7}.
For direct methods, see \cite{sal34,sal3}. Furthermore,
Almeida et al. developed direct numerical methods based
on the idea of writing the fractional operators in power series,
and then approximating the fractional problems with classical ones \cite{MyID:294,sal8}.
In this paper, we use a different approach to solve fractional variational problems
and fractional optimal control problems, based on Ritz's direct method. The idea
is to restrict admissible functions to linear combinations
of a set of known basis functions.  We choose basis functions
in such a way that the approximated function satisfies
the given boundary conditions. Using the approximated function and its derivatives whenever needed,
we transform the functional into a multivariate function of unknown coefficients.
Recently, Dehghan et al. in \cite{sal9} used the Rayleigh--Ritz method, based
on Jacobi polynomials, to solve fractional optimal control problems. Several
illustrative examples show that our results are more accurate and more useful
than the ones introduced in \cite{sal8,sal9}.

The paper is organized as follows. In Section~\ref{sec2}, we present
some necessary preliminaries on fractional calculus. In Section~\ref{sec3},
we investigate Ritz's method for solving three kinds of fractional
variational problems. In Section~\ref{sec4}, we solve five examples
of fractional variational problems and three examples of fractional
optimal control problems, comparing our results with previous methods
available in the literature. The main conclusions are given
in Section~\ref{sec:conc}.


\section{Preliminaries and Notations About Fractional Calculus}
\label{sec2}

Riemann--Liouville fractional integrals are a generalization of the $n$ fold integral,
$n\in \mathbb{N}$, to real value numbers. Using the usual notation in the theory of fractional
calculus, we define the Riemann--Liouville fractional integrals as follows.

\begin{definition}[Fractional integrals]
The left and right Riemann--Liouville fractional integrals
of order $\alpha>0$ of a given function $f$ are defined by
\begin{equation*}
_aI_x^\alpha f(x):=\frac{1}{\Gamma(\alpha)}
\int_a^x(x-t)^{\alpha-1}f(t)dt
\end{equation*}
and
\begin{equation*}
_xI_b^{\alpha}f(x):=\frac{1}{\Gamma(\alpha)}\int_x^b(t-x)^{\alpha-1}f(t)dt,
\end{equation*}
respectively, where $\Gamma$ is Euler's gamma function, that is,
\begin{equation*}
\Gamma(x):=\int_0^\infty t^{x-1}e^{-t}dt,
\end{equation*}
and $a<x<b$.
\end{definition}

The left Riemann--Liouville fractional operator has the following properties:
\begin{equation*}
{_aI_x^\alpha} {_aI_x^\beta}
= {_aI_x}^{\alpha+\beta},
\quad {_aI_x^\alpha} {_aI_x^\beta}
= {_aI_x^\beta} {_aI_x^\alpha},
\end{equation*}
\begin{equation}
\label{eq11}
_aI_x^\alpha (x-a)^n=\frac{\Gamma(n+1)}{\Gamma(\alpha+n+1)}(x-a)^{\alpha+n}, \quad x>a,
\end{equation}
where $\alpha,\beta\geq 0$ and $n\in \mathbb{N}_0=\{0,1,\ldots\}$. Similar relations
hold for the right Riemann--Liouville fractional operator. Now, using the definition
of fractional integral, we define two kinds of fractional derivatives.

\begin{definition}[Riemann--Liouville derivatives]
Let $\alpha>0$ with $n-1<\alpha\leq n$, $n\in\mathbb{N}$.
The left and right Riemann--Liouville fractional derivatives
of order $\alpha>0$ of a given function $f$ are defined by
\begin{equation*}
_aD_x^{\alpha}f(x):=D^n(_aI_x^{n-\alpha})f(x)
=\frac{1}{\Gamma(n-\alpha)}
\frac{d^n}{dx^n}\int_a^x(x-t)^{n-\alpha-1}f(t)dt
\end{equation*}
and
\begin{equation*}
_xD_b^\alpha f(x):=(-1)^nD^n(_xI_b^{n-\alpha})f(x)
=\frac{(-1)^n}{\Gamma(n-\alpha)}
\frac{d^n}{dx^n}\int_x^b(t-x)^{n-\alpha-1}f(t)dt,
\end{equation*}
respectively.
\end{definition}

The following relations hold for the left Riemann--Liouville fractional derivative:
\begin{equation*}
{_aD_x^\alpha} {_aD_x^\beta}
= {_aD_x}^{\alpha+\beta},
\quad {_aD_x^\alpha} {_aD_x^\beta}
= {_aD_x^\beta} {_aD_x^\alpha},
\end{equation*}
$$
_aD_x^\alpha k= \frac{k (x-a)^\alpha}{\Gamma(1-\alpha)},
$$
\begin{equation}
\label{eq12}
_aD_x^\alpha (x-a)^n=\frac{\Gamma(n+1)}{\Gamma(n-\alpha+1)}(x-a)^{n-\alpha}, \quad x>a,
\end{equation}
where $\alpha,\beta\geq 0$, $k$ is a constant and $n\in \mathbb{N}_0$.
Similar relations hold for the right Riemann--Liouville fractional derivative.

Another type of fractional derivative, which also uses the Riemann--Liouville
fractional integral in its definition, was proposed by Caputo \cite{sal16}.

\begin{definition}[Caputo derivatives]
Let $\alpha>0$ with $n-1<\alpha\leq n$, $n\in\mathbb{N}$.
The left and right Caputo fractional derivatives
of order $\alpha>0$ of a given function $f$ are defined by
\begin{equation*}
_a^CD_x^\alpha f(x):=(_aI_x^{n-\alpha})D^n f(x)
=\frac{1}{\Gamma(n-\alpha)}\int_a^x(x-t)^{n-\alpha-1}f^{(n)}(t)dt
\end{equation*}
and
\begin{equation*}
_x^CD_b^\alpha f(x):=(-1)^n(_xI_b^{n-\alpha})D^n f(x)
=\frac{(-1)^n}{\Gamma(n-\alpha)}\int_x^b(t-x)^{n-\alpha-1}f^{(n)}(t)dt,
\end{equation*}
respectively.
\end{definition}

The following relations hold for the left Caputo fractional derivative:
\begin{equation*}
{_a^CD_x^\alpha} \, {_a^CD_x^\beta}
= {{_a^CD_x^{\alpha+\beta}}},
\quad {_a^CD_x^\alpha} \, {_a^CD_x^\beta}
= {_a^CD_x^\beta} \, {_a^CD_x^\alpha},
\end{equation*}
$$
_a^CD_x^\alpha k = 0,
$$
\begin{equation}
\label{eq13}
_a^CD_x^\alpha (x-a)^n=\left\{
\begin{array}{ll}
\frac{\Gamma(n+1)}{\Gamma(n-\alpha+1)}(x-a)^{n-\alpha},
&  n\geq [\alpha],\\
0, & n< [\alpha],
\end{array} \right.
\end{equation}
where $\alpha,\beta\geq 0$, $k$ is a constant, $x>a$,
$n\in \mathbb{N}_0$, and $[\cdot]$ is the ceiling function,
that is, $[\alpha]$ is the smallest integer greater than or equal to $\alpha$. Similar relations
hold for the right Caputo fractional derivative. Moreover, the following relations between Caputo
and Riemann--Liouville fractional derivatives hold:
\begin{equation*}
_a^CD_x^\alpha f(x)={_aD_x^\alpha} f(x)
-\sum_{k=0}^{n-1}\frac{f^{(k)}(a)}{\Gamma(k-\alpha+1)}(x-a)^{k-\alpha}
\end{equation*}
and
\begin{equation*}
_x^CD_b^\alpha f(x)={_xD_b^\alpha} f(x)
-\sum_{k=0}^{n-1}\frac{f^{(k)}(b)}{\Gamma(k-\alpha+1)}(b-x)^{k-\alpha}.
\end{equation*}
Therefore, if $f\in C^n[a,b]$ and $f^{(k)}(a)=0$, $k=0,1,\ldots,n-1$, then
$_a^CD_x^\alpha f= {_aD_x^\alpha} f$;
if $f^{(k)}(b)=0$, $k=0,1,\ldots,n-1$, then
$_x^CD_b^\alpha f={_xD_b^\alpha} f$.


\section{Main Results}
\label{sec3}

Consider the following fractional variational problem:
find $y\in C^n[a,b]$ in such a way to minimize or maximize the functional
\begin{equation}
\label{eq3.1}
J\{y\}=\int_a^b L(t,y(t),D^\alpha y(t))dt, \quad n-1\leq \alpha<n,
\end{equation}
subject to boundary conditions
\begin{equation}
\label{eq3.2}
y^{(k)}(a)=u_{k,a},\quad y^{(k)}(b)=u_{k,b}, \quad k=0,1,\ldots,n-1,
\end{equation}
where $L$ is the Lagrangian, assumed to be continuous with respect to all its arguments,
$D^\alpha$ is a fractional operator (left or right Riemann--Liouville fractional
integral or derivative or left or right Caputo fractional derivative), and $u_{k,a}$
and $u_{k,b}$, $k=0,1,\ldots,n-1$, are given constants. To solve problem
\eqref{eq3.1}--\eqref{eq3.2} with our method, we need to recall the following
classical theorems from approximation theory.

\begin{theorem}[Stone--Weierstrass theorem (1937)]
\label{thm:sw}
Let $K$ be a compact metric space and $A\subset C(K, R)$ a unital
sub-algebra, which separates points of $K$. Then, $A$ is dense in $C(K, R)$.
\end{theorem}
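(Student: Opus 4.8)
The plan is to run the classical lattice-theoretic argument. First I would pass to the uniform closure $\overline{A}$ of $A$ in $C(K,\mathbb{R})$ (with the sup-norm): since addition, scalar multiplication, and multiplication are continuous, $\overline{A}$ is again a subalgebra, and it inherits unitality and the point-separation property from $A$. Because $\overline{A}$ is closed, to prove $\overline{A}=C(K,\mathbb{R})$ it suffices to show that every $f\in C(K,\mathbb{R})$ can be approximated within arbitrary $\varepsilon>0$ by some element of $\overline{A}$.

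The one genuinely analytic step is to show that $\overline{A}$ is closed under the lattice operations $\max$ and $\min$. Via the identities $\max(f,g)=\tfrac12\bigl(f+g+|f-g|\bigr)$ and $\min(f,g)=\tfrac12\bigl(f+g-|f-g|\bigr)$, this reduces to the implication $f\in\overline{A}\Rightarrow |f|\in\overline{A}$. Here I would invoke the one-variable Weierstrass approximation theorem to approximate $t\mapsto|t|$ uniformly on the interval $[-\|f\|_\infty,\|f\|_\infty]$ by polynomials $p_k$; then $p_k\circ f\in\overline{A}$ (this is where unitality is used, since $\overline{A}$ must contain the constant terms of the $p_k$), and $p_k\circ f\to|f|$ uniformly, so $|f|\in\overline{A}$ by closedness. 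If the scalar Weierstrass theorem is not to be taken for granted, I would establish it directly, e.g.\ by expanding $\sqrt{1-u}$ in its binomial series (or by an elementary iteration) to approximate $\sqrt{t^2}=|t|$.

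Next comes the separation-to-interpolation lemma: given distinct $x,y\in K$ and reals $\alpha,\beta$, choose $\phi\in A$ with $\phi(x)\neq\phi(y)$ and set $g=\alpha+(\beta-\alpha)\dfrac{\phi-\phi(x)}{\phi(y)-\phi(x)}\in A$, so that $g(x)=\alpha$ and $g(y)=\beta$. Now fix $f\in C(K,\mathbb{R})$ and $\varepsilon>0$. For each ordered pair $(x,y)$ pick $g_{x,y}\in\overline{A}$ with $g_{x,y}(x)=f(x)$ and $g_{x,y}(y)=f(y)$. Fixing $x$, the sets $\{z\in K: g_{x,y}(z)<f(z)+\varepsilon\}$ are open and cover $K$ as $y$ ranges over $K$; extracting a finite subcover and taking the minimum of the corresponding finitely many $g_{x,y}$ produces $g_x\in\overline{A}$ with $g_x(x)=f(x)$ and $g_x<f+\varepsilon$ on all of $K$. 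Then the sets $\{z\in K: g_x(z)>f(z)-\varepsilon\}$ are open and cover $K$ as $x$ ranges over $K$; a finite subcover together with the maximum of the corresponding $g_x$ yields $h\in\overline{A}$ with $f-\varepsilon<h<f+\varepsilon$ on $K$, i.e.\ $\|f-h\|_\infty\le\varepsilon$. As $\varepsilon>0$ was arbitrary, $f\in\overline{A}$, and hence $\overline{A}=C(K,\mathbb{R})$.

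I expect the main obstacle to be precisely the reduction of $|f|\in\overline{A}$ to the scalar polynomial approximation of $t\mapsto|t|$: this is the only place where one does honest analysis rather than exploiting compactness and the algebraic/order structure of $\overline{A}$, and a self-contained treatment requires proving the one-dimensional Weierstrass theorem. The two covering arguments, once the lattice closure of $\overline{A}$ is available, are routine applications of compactness of $K$; one should only take care that unitality is genuinely invoked, since without it the correct statement is the non-unital Stone--Weierstrass theorem (subalgebra separating points and vanishing nowhere), which is not what is needed here.
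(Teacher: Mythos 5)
The paper does not prove this statement at all: Theorems~2.1 and~2.2 (Stone--Weierstrass and Weierstrass approximation) are recalled as classical background results from approximation theory, stated without proof and used only to justify that the polynomial trial functions $y_N$ of Ritz's method can approximate the unknown extremal uniformly. So there is no ``paper's proof'' to compare against; what you have supplied is a self-contained proof of a result the authors take as known.

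That said, your argument is the standard lattice-theoretic proof and it is correct and complete. The structure is sound: passing to the uniform closure $\overline{A}$, which remains a unital point-separating subalgebra; reducing closure under $\max$ and $\min$ to the implication $f\in\overline{A}\Rightarrow|f|\in\overline{A}$ via $\max(f,g)=\tfrac12(f+g+|f-g|)$; obtaining $|f|\in\overline{A}$ by composing $f$ with polynomial approximants of $t\mapsto|t|$ on $[-\|f\|_\infty,\|f\|_\infty]$ (correctly flagging that unitality is what lets $\overline{A}$ absorb the constant terms); the two-point interpolation lemma from separation plus unitality; and the two compactness covering arguments, first in $y$ with a finite minimum to get $g_x<f+\varepsilon$ with $g_x(x)=f(x)$, then in $x$ with a finite maximum to trap $f$ within $\varepsilon$. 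You also correctly identify that the only genuinely analytic input is the one-variable approximation of $|t|$, for which the binomial expansion of $\sqrt{1-u}$ or an elementary iteration suffices. The only cosmetic gaps are degenerate cases (e.g.\ $x=y$ in the interpolation step, handled by the constant function $f(x)\cdot 1$), which do not affect validity. Note that in the context of the paper this level of generality is not needed: the authors apply the result only on $K=[0,1]$ with $A$ the polynomial algebra, where the classical Weierstrass theorem (their Theorem~2.2) already gives the conclusion directly.
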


\begin{theorem}[Weierstrass approximation theorem (1885)]
\label{thm:wa}
If $f \in C([a, b], \mathbb{R})$, then there is a sequence of polynomials $P_n(x)$
that converges uniformly to $f(x)$ on $[a, b]$.
\end{theorem}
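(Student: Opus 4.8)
The plan is to deduce the Weierstrass approximation theorem directly from the Stone--Weierstrass theorem (Theorem~\ref{thm:sw}), which is already available. First I would take $K=[a,b]$ equipped with the usual metric; this is a compact metric space. Let $A\subset C([a,b],\mathbb{R})$ be the set of all polynomial functions $x\mapsto\sum_{k=0}^{m}c_k x^k$ restricted to $[a,b]$. The key steps are then to verify that $A$ satisfies the three hypotheses of Theorem~\ref{thm:sw}: (i) $A$ is a subalgebra, since sums, scalar multiples, and products of polynomials are again polynomials; (ii) $A$ is unital, because the constant function $1$ is a polynomial and belongs to $A$; and (iii) $A$ separates points of $[a,b]$, since for $x_1\neq x_2$ the monomial $p(x)=x$ already satisfies $p(x_1)\neq p(x_2)$.

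Granting these verifications, Theorem~\ref{thm:sw} gives that $A$ is dense in $C([a,b],\mathbb{R})$ for the uniform norm $\|g\|_\infty=\sup_{x\in[a,b]}|g(x)|$. Unwinding the definition of density, for each $n\in\mathbb{N}$ there exists a polynomial $P_n$ with $\|f-P_n\|_\infty<1/n$. Since $1/n\to0$, the sequence $(P_n)$ converges uniformly to $f$ on $[a,b]$, which is exactly the assertion.

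The only genuine obstacle here is foundational rather than technical: one must ensure that Stone--Weierstrass is being invoked in a self-consistent way, i.e.\ that its proof does not itself presuppose the Weierstrass approximation theorem. If one prefers a direct, elementary argument, I would instead use Bernstein polynomials: after the affine change of variables carrying $[a,b]$ onto $[0,1]$, set $B_n(x)=\sum_{k=0}^{n} f\!\left(\tfrac{k}{n}\right)\binom{n}{k}x^k(1-x)^{n-k}$ and estimate $|f(x)-B_n(x)|$ by splitting the sum into indices $k$ with $|k/n-x|$ small (using uniform continuity of $f$ on the compact interval) and those with $|k/n-x|$ large (using the variance bound $\sum_{k}(k/n-x)^2\binom{n}{k}x^k(1-x)^{n-k}=x(1-x)/n\le 1/(4n)$ together with the boundedness of $f$). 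In that route the harder bookkeeping is the Chebyshev-type tail estimate, but it is entirely standard.
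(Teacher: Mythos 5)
Your proposal is correct. Note, however, that the paper itself offers no proof of this statement: Theorem~\ref{thm:wa} is simply \emph{recalled} as a classical result from approximation theory (as is Theorem~\ref{thm:sw}), so there is no argument in the paper to compare against. Your derivation from Stone--Weierstrass is the standard one and all three verifications (subalgebra, unital, point-separating via $p(x)=x$) are trivially satisfied; unwinding density in the uniform norm to extract a sequence $P_n$ with $\|f-P_n\|_\infty<1/n$ is exactly right. Your caveat about circularity is well taken but harmless: the usual proofs of Stone--Weierstrass only require a direct polynomial approximation of $\sqrt{t}$ (or $|t|$) on a compact interval, which is established independently, so no circularity arises. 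The Bernstein-polynomial route you sketch is the classical self-contained alternative and would also be acceptable; it buys constructiveness and explicit error control at the cost of the Chebyshev-type tail estimate, whereas the Stone--Weierstrass route is shorter given that the paper already states that theorem immediately beforehand.
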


\begin{theorem}
\label{theorem3}
Let $P_N(x):=\sum_{i=0}^N c_i (x-a)^i$ be a polynomial. Then,
\begin{equation}
\label{eq14}
_aI_x^\alpha P_N(x)=\sum_{i=0}^N c_i
\frac{\Gamma(i+1)}{\Gamma(\alpha+i+1)}(x-a)^{\alpha+i},
\end{equation}
\begin{equation}
\label{eq15}
_aD_x^\alpha P_N(x)=\sum_{i=0}^N c_i
\frac{\Gamma(i+1)}{\Gamma(i-\alpha+1)}(x-a)^{i-\alpha},
\end{equation}
\begin{equation}
\label{eq16}
_a^CD_x^\alpha P_N(x)
=\sum_{i = [\alpha]}^{N}\frac{\Gamma(i+1)}{\Gamma(i-\alpha+1)}(x-a)^{i-\alpha},
\end{equation}
for $x>a$.
\end{theorem}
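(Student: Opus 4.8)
The plan is to prove each of the three formulas \eqref{eq14}, \eqref{eq15}, \eqref{eq16} by exploiting linearity of the fractional operators together with the monomial formulas \eqref{eq11}, \eqref{eq12}, \eqref{eq13} already recorded in Section~\ref{sec2}. Since $P_N(x) = \sum_{i=0}^N c_i (x-a)^i$ is a finite sum, and since each of $_aI_x^\alpha$, $_aD_x^\alpha$, $_a^CD_x^\alpha$ is a linear operator (the first being an integral, the second and third being compositions of an integral with the ordinary $n$-th derivative $D^n$, all of which are linear), we have
\begin{equation*}
{}_a\mathcal{O}_x^\alpha P_N(x) = \sum_{i=0}^N c_i \, {}_a\mathcal{O}_x^\alpha (x-a)^i
\end{equation*}
for $\mathcal{O} \in \{I, D, {}^C D\}$. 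So the whole argument reduces to substituting the known action of the operator on the monomial $(x-a)^i$.

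First I would treat \eqref{eq14}: apply \eqref{eq11} with $n$ replaced by $i$ to each term, giving ${}_aI_x^\alpha (x-a)^i = \frac{\Gamma(i+1)}{\Gamma(\alpha+i+1)}(x-a)^{\alpha+i}$, and sum over $i=0,\dots,N$. Second, for \eqref{eq15}, apply \eqref{eq12} with $n$ replaced by $i$ to get ${}_aD_x^\alpha (x-a)^i = \frac{\Gamma(i+1)}{\Gamma(i-\alpha+1)}(x-a)^{i-\alpha}$ and sum. Third, for \eqref{eq16}, apply the Caputo monomial formula \eqref{eq13}: for $i < [\alpha]$ the contribution vanishes, while for $i \geq [\alpha]$ one gets $\frac{\Gamma(i+1)}{\Gamma(i-\alpha+1)}(x-a)^{i-\alpha}$; hence only the indices $i = [\alpha], [\alpha]+1, \dots, N$ survive, which is exactly the stated sum. (I would note in passing that, as written, the right-hand side of \eqref{eq16} appears to be missing the coefficients $c_i$; the correct expression carries the factor $c_i$ inside the sum, matching what linearity plus \eqref{eq13} produces.)

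In all three cases the convergence/interchange issues that usually accompany such manipulations are absent, because the sum is \emph{finite}: there is no limit to justify, no dominated convergence, no term-by-term differentiation of an infinite series. The only hypothesis needed is $x > a$, which is exactly the hypothesis under which \eqref{eq11}, \eqref{eq12}, \eqref{eq13} were stated, and which guarantees the monomials $(x-a)^i$ are smooth enough on $(a,b]$ for the Caputo and Riemann--Liouville derivatives to be taken termwise. Thus there is no real obstacle; the "hard part," such as it is, is merely bookkeeping with the index shift in \eqref{eq16} and being careful that the ceiling-function case split in \eqref{eq13} is applied with $n = i$ (so the threshold is $i \geq [\alpha]$, equivalently $i > \alpha - 1$). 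I would close by remarking that the theorem is the computational engine of the paper: combined with the Weierstrass approximation theorem (Theorem~\ref{thm:wa}), it lets one replace any admissible $y$ by a polynomial and then evaluate all fractional operators appearing in \eqref{eq3.1} in closed form.
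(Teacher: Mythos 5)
Your proof is correct and takes essentially the same approach as the paper, whose entire argument is likewise ``linearity of the fractional operators plus the monomial formulas \eqref{eq11}, \eqref{eq12} and \eqref{eq13}.'' Your parenthetical remark that the right-hand side of \eqref{eq16} is missing the coefficients $c_i$ is also well taken --- that is a typo in the statement, and the correct formula carries $c_i$ inside the sum exactly as you describe.
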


\begin{proof}
Follows from relations \eqref{eq11}, \eqref{eq12} and \eqref{eq13}
and the linearity property of the fractional operators.
\hfill $\qed$
\end{proof}

For solving fractional variational problems involving right fractional operators,
we use the following theorem.

\begin{theorem}
\label{theorem4}
Let $P_N(x):=\sum_{i=0}^{N} c_i (b-x)^i$ be a polynomial. Then,
\begin{equation}
\label{eq30}
_xI_b^\alpha P_N(x)=\sum_{i=0}^N c_i
\frac{\Gamma(i+1)}{\Gamma(\alpha+i+1)}(b-x)^{\alpha+i},
\end{equation}
\begin{equation}
\label{eq31}
_xD_b^\alpha P_N(x)=\sum_{i=0}^N c_i
\frac{\Gamma(i+1)}{\Gamma(i-\alpha+1)}(b-x)^{i-\alpha},
\end{equation}
\begin{equation}
\label{eq32}
_x^CD_b^\alpha P_N(x)
=\sum_{i=[\alpha]}^{N}\frac{\Gamma(i+1)}{\Gamma(i-\alpha+1)}(b-x)^{i-\alpha},
\end{equation}
for $x<b$.
\end{theorem}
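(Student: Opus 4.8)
The plan is to mirror exactly the argument used for Theorem~\ref{theorem3}, but now working with the right-sided fractional operators and the shifted monomials $(b-x)^i$ in place of $(x-a)^i$. First I would invoke the ``similar relations hold'' remarks recorded immediately after Definitions of the Riemann--Liouville integral, the Riemann--Liouville derivative and the Caputo derivative; these give the right-sided analogues of \eqref{eq11}, \eqref{eq12} and \eqref{eq13}, namely
\begin{equation*}
_xI_b^\alpha (b-x)^n=\frac{\Gamma(n+1)}{\Gamma(\alpha+n+1)}(b-x)^{\alpha+n},
\qquad
_xD_b^\alpha (b-x)^n=\frac{\Gamma(n+1)}{\Gamma(n-\alpha+1)}(b-x)^{n-\alpha},
\end{equation*}
for $x<b$, together with the right Caputo version that vanishes when $n<[\alpha]$ and equals $\frac{\Gamma(n+1)}{\Gamma(n-\alpha+1)}(b-x)^{n-\alpha}$ when $n\geq[\alpha]$. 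If one prefers not to take these for granted, each follows from the corresponding left-sided identity by the substitution $t\mapsto a+b-t$ (with $a$ irrelevant here since the monomials are centered at $b$), which swaps $_aI_x^\alpha$ with $_xI_b^\alpha$ and sends $(x-a)^n$ to $(b-x)^n$; I would include one such change-of-variables computation to justify the integral case and note that the derivative cases follow by differentiating, keeping track of the factor $(-1)^n$ built into the definition.

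The second step is purely the linearity of the three operators: since $_xI_b^\alpha$, $_xD_b^\alpha$ and $_x^CD_b^\alpha$ are all linear (they are compositions of the linear Riemann--Liouville integral with the linear operator $D^n$, possibly with a sign), applying any of them to $P_N(x)=\sum_{i=0}^N c_i(b-x)^i$ term by term yields the stated formulas \eqref{eq30}, \eqref{eq31} and \eqref{eq32}. For \eqref{eq32} one must also observe that the terms with $i<[\alpha]$ are annihilated by the right Caputo derivative, which is why the summation index starts at $[\alpha]$; this is the only point where the three identities genuinely differ from one another.

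There is no real obstacle: the result is the right-sided twin of Theorem~\ref{theorem3} and the proof is ``follows from the right-sided analogues of \eqref{eq11}--\eqref{eq13} and linearity,'' exactly as in the proof of Theorem~\ref{theorem3}. The only mild subtlety worth flagging is bookkeeping with the sign $(-1)^n$ in the definitions of the right-sided derivatives and the fact that $\frac{d}{dx}(b-x)^m=-m(b-x)^{m-1}$, so that the sign is absorbed rather than propagated; doing the change of variables $t\mapsto a+b-t$ once, carefully, makes this transparent and reduces everything to the already-established left-sided statements. Hence I would keep the written proof to essentially one or two sentences, as the authors do for Theorem~\ref{theorem3}.

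\begin{proof}
The right-sided analogues of \eqref{eq11}, \eqref{eq12} and \eqref{eq13} read
\begin{equation*}
_xI_b^\alpha (b-x)^n=\frac{\Gamma(n+1)}{\Gamma(\alpha+n+1)}(b-x)^{\alpha+n},
\qquad
_xD_b^\alpha (b-x)^n=\frac{\Gamma(n+1)}{\Gamma(n-\alpha+1)}(b-x)^{n-\alpha},
\end{equation*}
for $x<b$, and $_x^CD_b^\alpha (b-x)^n$ equals $\frac{\Gamma(n+1)}{\Gamma(n-\alpha+1)}(b-x)^{n-\alpha}$ if $n\geq[\alpha]$ and vanishes if $n<[\alpha]$; each of these follows from the corresponding left-sided identity by the substitution $t\mapsto a+b-t$, which interchanges the left and right Riemann--Liouville integrals and maps $(x-a)^n$ to $(b-x)^n$, the sign $(-1)^n$ in the definitions of the right-sided derivatives being absorbed by the $n$ derivatives of $(b-x)^m$. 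Applying now the linearity of $_xI_b^\alpha$, $_xD_b^\alpha$ and $_x^CD_b^\alpha$ to $P_N(x)=\sum_{i=0}^N c_i(b-x)^i$ term by term gives \eqref{eq30}, \eqref{eq31} and \eqref{eq32}, respectively, where in \eqref{eq32} the terms with $i<[\alpha]$ are annihilated by the right Caputo derivative.
\hfill $\qed$
\end{proof}
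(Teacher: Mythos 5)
Your proposal is correct and follows essentially the same route as the paper: the authors prove Theorem~\ref{theorem4} by noting it is ``similar to the proof of Theorem~\ref{theorem3},'' i.e., it follows from the right-sided analogues of \eqref{eq11}--\eqref{eq13} (which the paper asserts via its ``similar relations hold'' remarks) together with linearity. Your version merely adds the optional change-of-variables justification $t\mapsto a+b-t$ for those right-sided identities, which the paper leaves implicit.
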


\begin{proof}
Similar to the proof of Theorem~\ref{theorem3}. \hfill $\qed$
\end{proof}

Without any loss of generality, from now on we consider $a = 0$, $b = 1$
and $t\in [0, 1]$ in the fractional variational problem \eqref{eq3.1}--\eqref{eq3.2}.


\subsection{Fractional Variational Problems Involving Left Operators}
\label{sec:MR:FVP:LO}

Consider the fractional variational functional \eqref{eq3.1} involving
a left fractional operator, subject to boundary conditions \eqref{eq3.2}.
To find the function $y(x)$ that solves problem
\eqref{eq3.1}--\eqref{eq3.2}, we put
\begin{equation}
\label{eq:yN:L}
y(x)\simeq y_N(x)=\sum_{i=0}^N c_i x^i.
\end{equation}
Then, by substituting \eqref{eq:yN:L}
and relations \eqref{eq14}--\eqref{eq16} into \eqref{eq3.1}, we obtain
\begin{equation}
\label{eq18}
J[y]=J[c_0,c_1,\ldots ,c_N]=\int_0^1 L(x,y_N(x),D^\alpha y_N(x))dx
\end{equation}
subject to boundary conditions
\begin{equation}
\label{eq19}
y_N^{(k)}(0)=u_{k,0}, \quad y_N^{(k)}(1)=u_{k,1},
\quad k=0,1,\ldots, n-1,
\end{equation}
which is an algebraic function of unknowns $c_i$, $i=0,1,\ldots, N$.
To optimize the algebraic function $J$, we act as follows.
We should find $c_0,c_1,\ldots,c_N$, such that $y_N$ satisfies
boundary conditions \eqref{eq19}. This means that
the following relations must be satisfied:
\begin{equation}
\label{eq20}
y_N^{(k)}(0)=u_{k,0}, \quad y_N^{(k)}(1)=u_{k,1},
\quad k=0,1,\ldots,n-1.
\end{equation}
Using the well known Hermite interpolation and the relations \eqref{eq20},
we calculate $c_0,c_1,\ldots,c_n$. For obtaining the values of $c_{n+1},c_{n+2},\ldots,c_N$,
firstly we use the $N+1$ point Gauss--Legendre quadrature rule, which is exact
for every polynomial of degree up to $2N+2$. Secondly, we calculate the exact value
of the integral in the right-hand side of \eqref{eq18}. Then, according
to differential calculus, we must solve the following system of equations:
\begin{equation}
\label{eq21}
\frac{\partial J}{\partial c_j}=0,
\quad j=n+1,n+2,\ldots,N,
\end{equation}
which, depending on the form of $L$, is a linear or nonlinear system of equations.
Furthermore, we choose the value of $N$ such that $|y_{N+1}(x)-y_N(x)|\cong 0(x)$,
where $0(x)$ is the null polynomial.


\subsection{Fractional Variational Problems Involving Right Operators}

Now consider the fractional variational functional \eqref{eq3.1}
involving a right fractional operator, subject to boundary conditions
\eqref{eq3.2}. To find function $y(x)$ that solves
problem \eqref{eq3.1}--\eqref{eq3.2}, we put
\begin{equation}
\label{eq:yN:R}
y(x)\simeq y_N(x)=\sum_{i=0}^N c_i (1-x)^i.
\end{equation}
Then, substituting \eqref{eq:yN:R} and relations
\eqref{eq30}--\eqref{eq32} into \eqref{eq3.1}, we obtain
\begin{equation}
\label{eq:alg:R}
J[y]=J[c_0,c_1,\ldots ,c_N]=\int_0^1 L(x,y_N(x),D^\alpha y_N(x))dx
\end{equation}
subject to boundary conditions
\begin{equation*}
y_N^{(k)}(0)=u_{k,0}, \quad y_N^{(k)}(1)=u_{k,1}, \quad k=0,1,\ldots, n-1,
\end{equation*}
which is an algebraic function of unknowns $c_i$, $i=0,1,\ldots, N$.
To optimize the algebraic function \eqref{eq:alg:R},
we act as explained in Section~\ref{sec:MR:FVP:LO}.


\subsection{Fractional Optimal Control Problems}

A fractional optimal control problem requires finding
a control function $u(t)$ and the corresponding
state trajectory $x(t)$, that minimizes (or maximizes)
a given functional
\begin{equation}
\label{eq9}
J\{x,u\}=\int_a^b L\left(t,x(t),u(t)\right) dt
\end{equation}
subject to a fractional dynamical control system
\begin{equation}
\label{eq7}
D^\alpha x(t)=f(t,x(t),u(t))
\end{equation}
and boundary conditions
\begin{equation}
\label{eq:foc:bc}
x^{(k)}(a)=u_{k,a},\quad x^{(k)}(b)=u_{k,b},
\quad k=0,1,\ldots,n-1,
\end{equation}
where $D^\alpha$ is a fractional operator, $\alpha$ is a positive real number,
and $f$ and $L$ are two known functions. For more details, see
\cite{MyID:294,sal14,MR2433010,MR2386201,sal15} and the references therein.

Here we restrict our attention to those fractional optimal control problems
\eqref{eq9}--\eqref{eq:foc:bc}, for which one can solve \eqref{eq7}
with respect to $u$ and write
\begin{equation}
\label{eq8}
u(t)=g\left(t,x(t),D^\alpha x(t)\right).
\end{equation}
Then, by substituting \eqref{eq8} into \eqref{eq9},
we obtain the following fractional variational problem (FVP):
find function $x(t)$ that extremizes the functional
\begin{equation}
\label{3q10}
J\{x\}=\int_a^b L\left(t,x(t),g(t,x(t),D^\alpha x(t))\right) dt
\end{equation}
subject to boundary conditions
\begin{equation}
\label{3q10:bc}
x^{(k)}(a)=u_{k,a},\quad x^{(k)}(b)=u_{k,b},
\quad k=0,1,\ldots,n-1.
\end{equation}
We solve the FVP \eqref{3q10}--\eqref{3q10:bc} by
using the method illustrated in previous sections,
and then we find control function $u(t)$ by using \eqref{eq8}.

\begin{remark}
Similarly to classical Ritz's method, the trial functions are selected
to meet boundary conditions (and any other constraints). The exact solutions
are not known; and the trial functions are parametrized by adjustable
coefficients, which are varied to find the best approximation
for the basis functions used. The choice of the basis functions
depends on the solution space. Since our problems
involve finding $C^n$ solutions, we choose the basis functions
to be polynomials. If the solution space is another one, like
the $L_p$ space or the space of harmonic functions, then
we should choose some basis functions like Fourier or wavelet basis.
\end{remark}

\begin{remark}
For choosing the number $N$ one needs to decide on the required accuracy.
One can stop when the difference between two consecutive approximations
$y_N$ and $y_{N-1}$ or their respective functional values is smaller
than a desired tolerance, that is, when
$\left\|y_N - y_{N-1}\right\| \leq \varepsilon$ or when
$\left|J[y_N] - J[y_{N-1}]\right| \leq \varepsilon$ for some given $\varepsilon$.
Note that for a minimization (maximization) problem, the functional $J$ is always
a non-increasing (non-decreasing) function of $N$:
$J[y_N] \leq J[y_{N-1}]$ ($J[y_N] \geq J[y_{N-1}]$). Moreover, if $y$
is the (unknown) exact solution of the fractional variational problem
at hand, then $y_N$ converges uniformly to $y$
(Theorems~\ref{thm:sw} and \ref{thm:wa}).
\end{remark}

\begin{remark}
Throughout the paper, $N$ is greater than $n$.
\end{remark}

In the next section, we solve five fractional problems
of the calculus of variations and three fractional
optimal control problems. Six of the eight problems
involve left fractional operators, while the other two
involve right operators.


\section{Illustrative Examples}
\label{sec4}

We begin by solving two problems of the calculus of variations
involving a left Riemann--Liouville fractional derivative.
These problems were recently investigated by Almeida et al. in \cite{sal34}.
We also solve three examples involving left and right Caputo fractional derivatives,
which were recently considered by Dehghan et al. in \cite{sal9}, and we compare the results.
Finally, we solve three fractional optimal control problems that
were investigated before in \cite{MyID:294,sal15,sal17}.
For all examples, the error between the exact solution $y$
and the approximate solution $y_N$, found using our method,
is computed as follows:
\begin{equation}
\label{error}
Error\{y,y_N\}=\int_0^1(y(x)-y_N(x))^2 dx.
\end{equation}
The results show that our method is very simple but very accurate,
providing better results than those found in the literature.

\begin{example} \cite{sal34,sal8}
\label{example3.1}
Let $\alpha \in ]0, 1[$. Consider the following FVP:
\begin{equation}
\label{eq3.10}
\text{minimize} \quad J\{y\}
=\int_0^1 \left(_0D_x^\alpha y(x)-(y')^2(x)\right) dx,
\end{equation}
subject to
\begin{equation}
\label{eq3.10:bc}
y(0)=0, \quad y(1)=1.
\end{equation}
The exact solution to problem \eqref{eq3.10}--\eqref{eq3.10:bc} is
$$
y(x)=-\frac{1}{2\Gamma(3-\alpha)}(1-x)^{2-\alpha}
+\left(1-\frac{1}{2\Gamma(3-\alpha)}\right)x
+\frac{1}{2\Gamma(3-\alpha)}
$$
\cite{sal34,sal8}. With the given boundary conditions
\eqref{eq3.10:bc}, we consider
$$
y_N(x)=\left(1-\sum_{i=2}^N c_i\right) x
+\sum_{i=2}^N c_i x^i.
$$
Using our method, we calculate the $c_i$'s by solving the system of equations \eqref{eq21}.
For $\alpha=0.5$ and $N=2$, we have
$$
y(x)=0.376126 - 0.376126 (1 - x)^{1.5} + 0.623874 x,
$$
$$
y_2(x)=1.22568 x - 0.225676 x^2.
$$
Figure~\ref{Fig:1} plots the result for  $\alpha=0.5$ and $N=2$.
The errors computed with \eqref{error} for different values of $\alpha$ and $N$
are presented in Table~\ref{Table3.3}.
\begin{figure}[!htb]
\begin{center}
\includegraphics[scale=1.0]{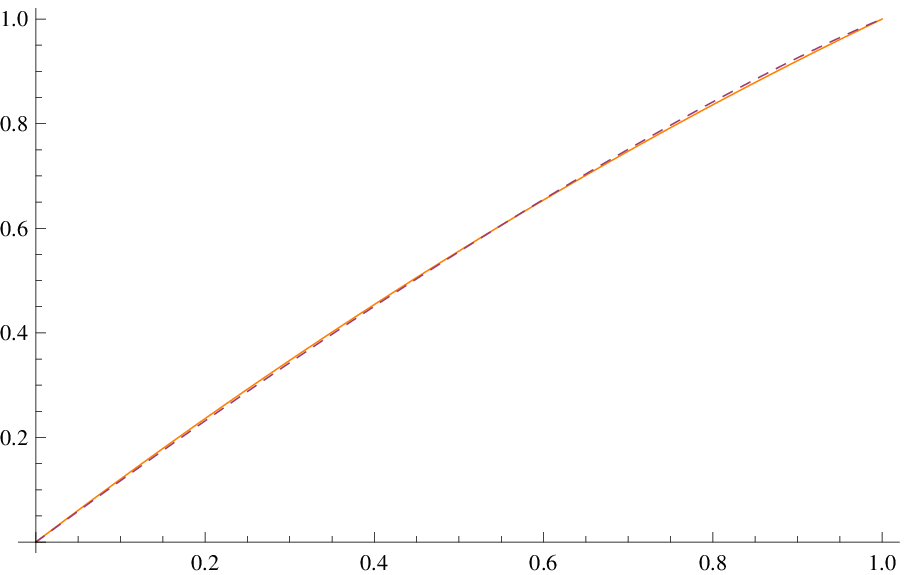}
\begin{minipage}{0.6\textwidth}
\caption{{\small Exact solution to problem \eqref{eq3.10}--\eqref{eq3.10:bc}
of Example~\ref{example3.1} with $\alpha=0.5$ (the solid line)
versus our approximate solution with $N=2$ (the dashed line).}}
\label{Fig:1}
\end{minipage}
\end{center}
\end{figure}
\begin{table}[ht]
\begin{center}
\begin{minipage}{0.58\textwidth}
\caption{{\small Errors obtained for problem \eqref{eq3.10}--\eqref{eq3.10:bc}
of Example~\ref{example3.1} when computed with \eqref{error}
for $\alpha=0.25,0.5,0.75$ and $N=3,4,5,6$.}}\label{Table3.3}
\end{minipage}
\begin{tabular}{|c|c|c|c|c|c|c|}
\hline
$\alpha$ & $N=3$&$N=4$&$N=5$&$N=6$\\\hline
$0.25$ & $1.7\times10^{-7}$&$1.9\times10^{-8}$&$3.6\times10^{-9}$&$9.1\times10^{-10}$\\
$0.5$ & $9.7\times10^{-6}$&$1.5\times10^{-7}$&$3.5\times10^{-8}$&$1.1\times10^{-8}$ \\
$0.75$ & $1.6\times10^{-6}$&$3.4\times10^{-7}$&$9.9\times10^{-8}$&$3.5\times10^{-8}$\\\hline
\end{tabular}
\end{center}
\end{table}
Our approximate results are better than
the ones presented in \cite{sal34,sal8}.
\end{example}


\begin{example} \cite{sal34}
\label{example3.2}
Let $\alpha=0.5$ and consider the minimization problem
\begin{equation}
\label{eq:J:ex3.2}
\text{minimize} \quad J\{y\}=\int_0^1 \left(_0D_x^\alpha y(x)-\frac{16\Gamma(6)}{\Gamma(5.5)}x^{4.5}
+\frac{20\Gamma(4)}{\Gamma(3.5)}x^{2.5}-\frac{5}{\Gamma(2.5)}x^{0.5}\right)^4 dx,
\end{equation}
subject to the boundary conditions
\begin{equation}
\label{eq:J:ex3.2:bc}
y(0)=0, \quad y(1)=1.
\end{equation}
The minimizer to the FVP \eqref{eq:J:ex3.2}--\eqref{eq:J:ex3.2:bc} is given by
$y(x)=16x^5-20x^3+5x$. For $N=5$, we obtain
$$
y_5(x)=5x + (2.3995\times 10^{-10}) x^2 {- 20} x^3 + (7.70126\times 10^{-10})x^4 + 16x^5.
$$
Figure~\ref{Fig:3} shows that our results are more accurate than the results presented in \cite{sal34}.
\begin{figure}[!htb]
\begin{center}
\subfloat[$N=4$]{\label{label:fig11}
\includegraphics[scale=0.77]{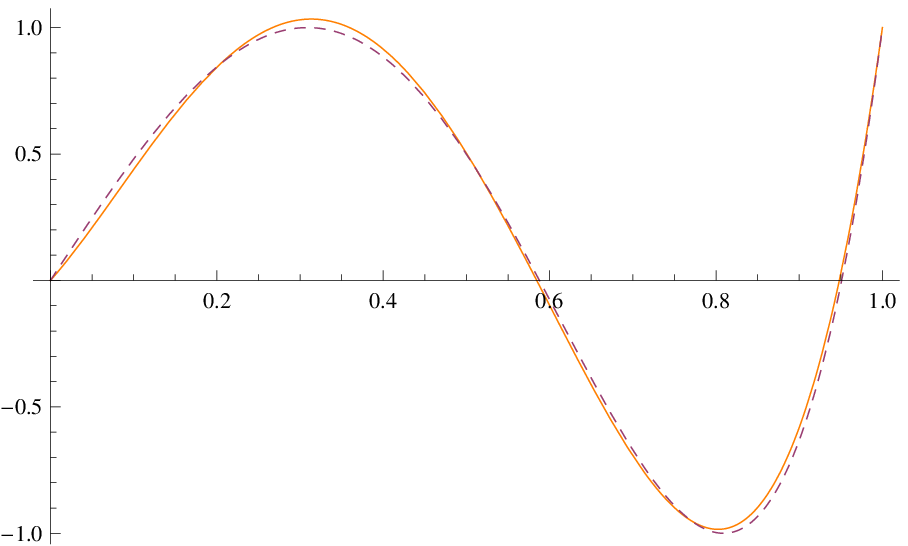}}
\qquad
\subfloat[$N=5$]{\label{label:fig21}
\includegraphics[scale=0.77]{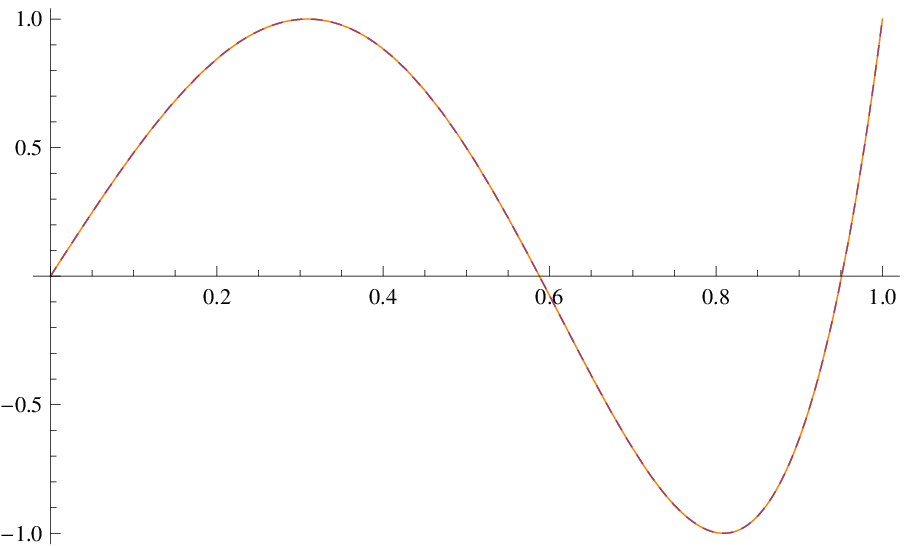}}\\
\begin{minipage}{0.95\textwidth}
\caption{{\small Exact solution to problem \eqref{eq:J:ex3.2}--\eqref{eq:J:ex3.2:bc}
of Example~\ref{example3.2} (the solid line) versus our approximate solutions
with $N=4,5$ (the dashed line).}}
\label{Fig:3}
\end{minipage}
\end{center}
\end{figure}
\end{example}


Below we solve three problems of the calculus of variations,
which were recently solved by Dehghan et al. in \cite{sal9}. Our results
show that our method is also more accurate than the method introduced in \cite{sal9}.

\begin{example} \cite{sal9}
\label{example3.3}
Consider the following FVP:
\begin{equation}
\label{eq:J:ex3.3}
\text{minimize}\quad J\{y\}=\frac{1}{2}\int_0^1(_0^CD_x^\alpha y(x)-f(x))^2dx,
\quad 0<\alpha <1,
\end{equation}
where $f(x)$ is given by
$$
f(x)=\frac{\Gamma(\beta +1)}{\Gamma(1+\beta -\alpha)}x^{\beta -\alpha},
$$
subject to the boundary conditions
\begin{equation}
\label{eq:J:ex3.3:bc}
y(0)=0, \quad y(1)=1.
\end{equation}
The exact solution to this problem is $y(x)=x^\beta$ \cite{sal9}.
In Figure~\ref{Fig:4}, the exact solution for $\alpha=0.5$ and $\beta=2.5$
versus our numerical solution for $N=3$ is plotted. Note that
for $\beta=k$, $k \in \mathbb{N}$, the error is equal to zero when $N\geq k$.
\begin{figure}[!htb]
\begin{center}
\includegraphics[scale=1.0]{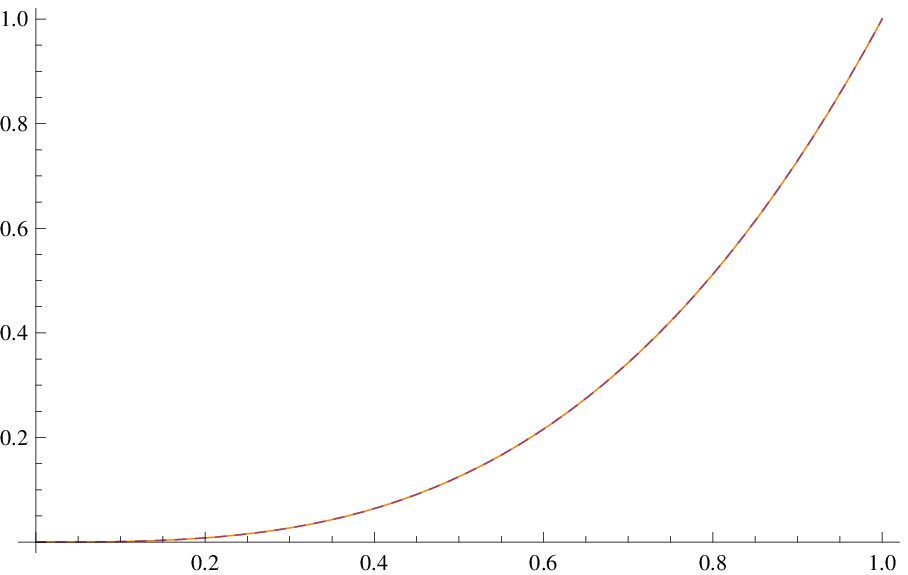}
\begin{minipage}{0.6\textwidth}
\caption{{\small Exact solution to problem \eqref{eq:J:ex3.3}--\eqref{eq:J:ex3.3:bc}
of Example~\ref{example3.3} with $\alpha=0.5$ and $\beta=2.5$ (the solid line)
versus our approximate solution with $N=3$ (the dashed line).}}
\label{Fig:4}
\end{minipage}
\end{center}
\end{figure}
\end{example}


\begin{example} \cite{sal9}
\label{3x}
Consider the following FVP, depending on a right Caputo fractional derivative:
\begin{equation}
\label{eq:ex:3x}
\text{minimize}\quad J\{y\}=\frac{1}{2}\int_0^1(_x^CD_1^\alpha y(x)-1)^2dx,
\quad 0<\alpha <1,
\end{equation}
subject to boundary conditions
\begin{equation}
\label{eq:ex:3x:bc}
y(0)=1+\frac{1}{\Gamma(1+\alpha)}, \quad y(1)=1.
\end{equation}
The exact solution to \eqref{eq:ex:3x}--\eqref{eq:ex:3x:bc} is given by
\begin{equation*}
y(x)=1+\frac{(1-x)^\alpha}{\Gamma(1+\alpha)}.
\end{equation*}
An approximate solution obtained with our method is shown in Figure~\ref{Fig:5}.
See Table~\ref{Table3.4} for the error of our approximations.
\begin{figure}[!htb]
\begin{center}
\includegraphics[scale=1.0]{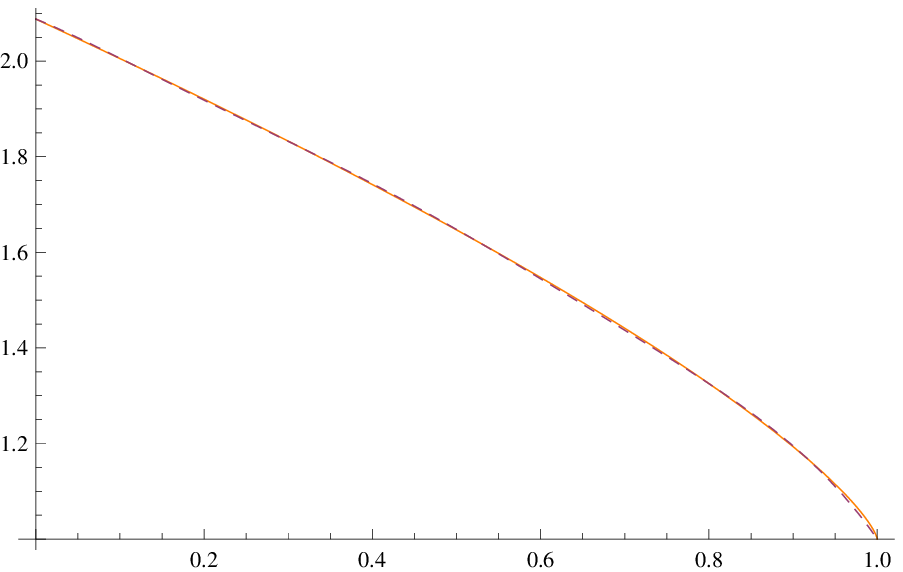}
\begin{minipage}{0.6\textwidth}
\caption{{\small Exact solution to problem \eqref{eq:ex:3x}--\eqref{eq:ex:3x:bc}
of Example~\ref{3x} with $\alpha=0.75$ (the solid line)
versus our approximate solution with $N=6$ (the dashed line).}}
\label{Fig:5}
\end{minipage}
\end{center}
\end{figure}
\begin{table}[ht]
\begin{center}
\begin{minipage}{0.65\textwidth}
\caption{{\small Errors obtained for problem \eqref{eq:ex:3x}--\eqref{eq:ex:3x:bc}
of Example~\ref{3x}, computed with \eqref{error} for $\alpha=0.75$
and $N=2,3,4,5,6$.}}\label{Table3.4}
\end{minipage}
\begin{tabular}{|c|c|c|c|c|c|c|}
\hline
$\alpha$ &$N=2$& $N=3$&$N=4$&$N=5$&$N=6$\\\hline
$0.75$ & $5.7\times10^{-4}$&$1.2\times10^{-4}$&$3.6\times10^{-5}$&$1.6\times10^{-5}$&$7.2\times10^{-6}$\\\hline
\end{tabular}
\end{center}
\end{table}
\end{example}


\begin{example} \cite{sal9}
\label{example4}
As a second example involving a fractional right operator,
consider the following FVP with $0<\alpha <1$:
\begin{equation}
\label{eq:prb:ex4}
\text{minimize}\quad J\{y\}=\int_0^1\left(_x^CD_1^\alpha y(x)+y(x)-(1-x)^\beta
-\frac{\Gamma(\beta +1)(1-x)^{\beta -\alpha}}{\Gamma(\beta -\alpha +1)}\right)^2 dx,
\end{equation}
subject to boundary conditions
\begin{equation}
\label{eq:prb:ex4:bc}
y(0)=1, \quad y(1)=0.
\end{equation}
In this case the exact solution is $y(x)=(1-x)^\beta$ \cite{sal9}. Figure~\ref{fig6}
shows the exact solution for $\alpha=0.39$ and $\beta=3$ versus our numerical solution with $N=3$.
The errors \eqref{error} of our approximations, for different values of $(\alpha,\beta)$,
$\beta\neq3$, are listed in Table~\ref{Table3.5}. As with Example~\ref{example3.3},
the error for $\beta=k$, $k \in \mathbb{N}$, is equal to zero when we consider $N\geq k$.
\begin{figure}[!htb]
\begin{center}
\includegraphics[scale=1.0]{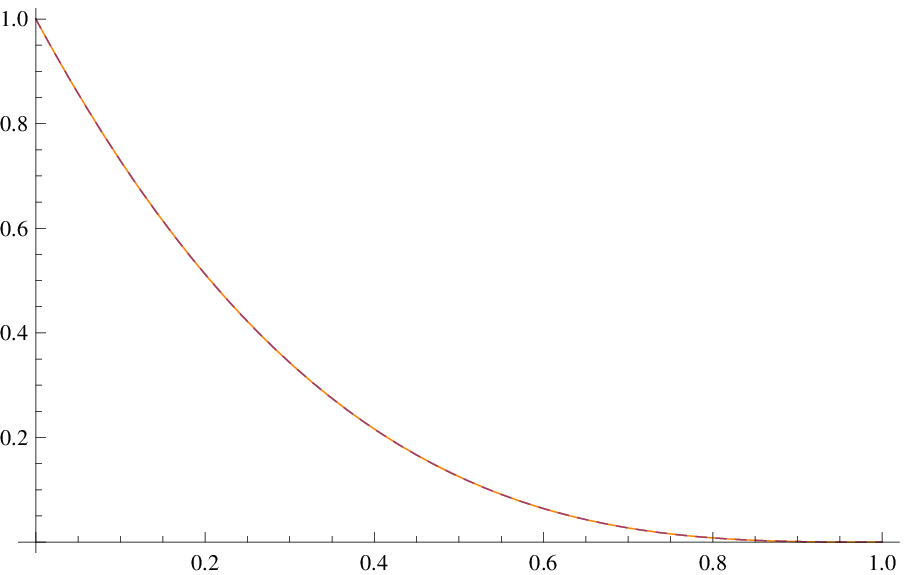}
\begin{minipage}{0.6\textwidth}
\caption{{\small Exact solution to problem \eqref{eq:prb:ex4}--\eqref{eq:prb:ex4:bc}
of Example~\ref{example4} with $\alpha=0.39$ and $\beta=3$ (the solid line)
versus our approximate solution with $N=3$ (the dashed line).}}
\label{fig6}
\end{minipage}
\end{center}
\end{figure}
\begin{table}[ht]
\begin{center}
\begin{minipage}{0.55\textwidth}
\caption{{\small Errors for problem \eqref{eq:prb:ex4}--\eqref{eq:prb:ex4:bc}
of Example~\ref{example4} when computed with \eqref{error} for different values
of $(\alpha,\beta)$ and $N=3,4,5$.}}
\label{Table3.5}
\end{minipage}
\begin{tabular}{|c|c|c|c|c|c|c|}
\hline
$(\alpha,\beta)$ & $N=3$&$N=4$&$N=5$\\\hline
$(0.39,1.5)$ & $6.1\times10^{-6}$&$8.1\times10^{-7}$&$1.7\times10^{-8}$\\
$(0.39,2.5)$ & $1.3\times10^{-6}$&$4.5\times10^{-8}$&$3.7\times10^{-9}$ \\
$(0.59,1.5)$ & $6.2\times10^{-6}$&$8.1\times10^{-7}$&$4.5\times10^{-9}$ \\
$(0.59,2.5)$ & $1.3\times10^{-6}$&$4.5\times10^{-8}$&$4.5\times10^{-9}$\\\hline
\end{tabular}
\end{center}
\end{table}
\end{example}


We finish by applying our method to three fractional optimal control problems.

\begin{example} \cite{sal15,sal17}
\label{exam5}
Consider the following fractional optimal control problem (FOCP):
\begin{equation}
\label{eq:J:OC:ex5}
\text{minimize}\quad J\{x,u\}=\int_0^1(tu(t)-(\alpha +2)x(t))^2 dt,
\end{equation}
subject to the dynamical fractional control system
\begin{equation}
\label{eq:J:OC:ex5:cs}
x'(t)+{_0^CD_t^\alpha} x(t)=u(t)+t^2
\end{equation}
and the boundary conditions
\begin{equation}
\label{eq:J:OC:ex5:bc}
x(0)=0, \quad x(1)=\frac{2}{\Gamma(3+\alpha)}.
\end{equation}
The exact solution is given by
\begin{equation*}
(x(t),u(t))=\left(\frac{2t^{\alpha+2}}{\Gamma(\alpha +3)},
\frac{2t^{\alpha+1}}{\Gamma(\alpha +2)}\right).
\end{equation*}
To solve this problem with our method, we take
\begin{equation*}
x_N(t)=\left(\frac{2}{\Gamma(3+\alpha)}
-\sum_{i=1}^N c_i\right)t+\sum_{i=1}^N c_i t^i.
\end{equation*}
The results for $\alpha=0.5$ and $N=3,4$ are plotted in Figure~\ref{fig7}.
Table~\ref{Table3.9} shows the errors for $\alpha=0.5$ and $N=2,3,4$.
\begin{figure}[!htb]
\begin{center}
\subfloat[$x(t)$, $N=3$]{\label{label:fig1}
\includegraphics[scale=0.77]{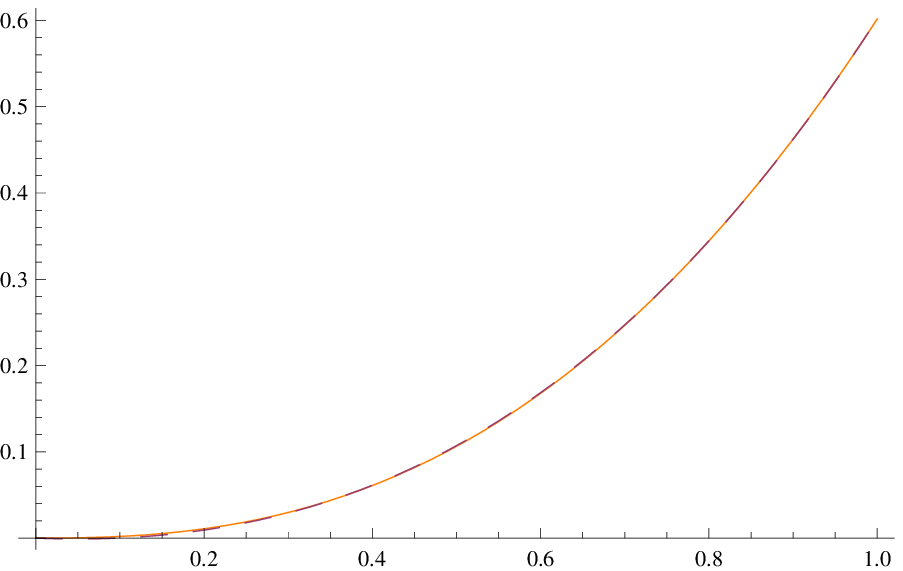}}
\qquad
\subfloat[$u(t)$, $N=3$]{\label{label:fig2}
\includegraphics[scale=0.77]{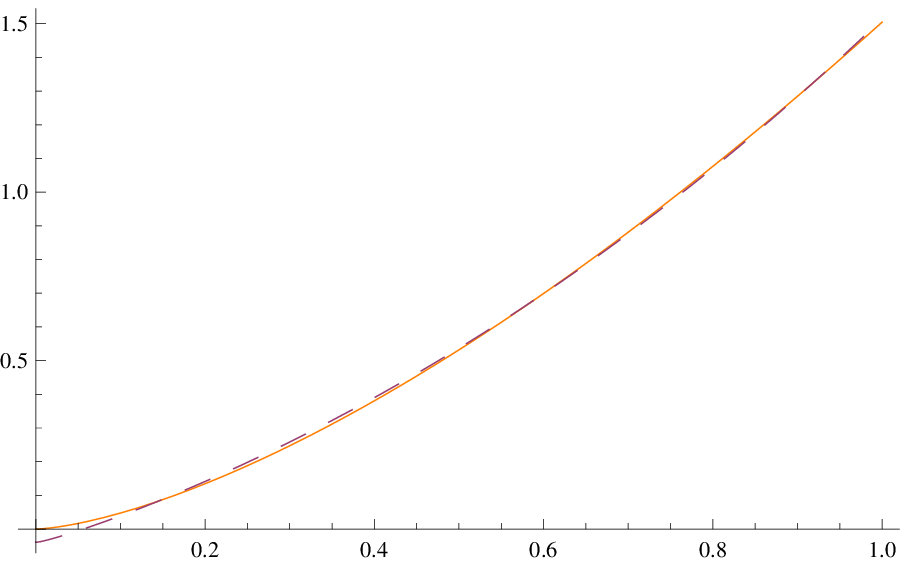}}\\
\subfloat[$x(t)$, $N=4$]{\label{label:fig3}
\includegraphics[scale=0.77]{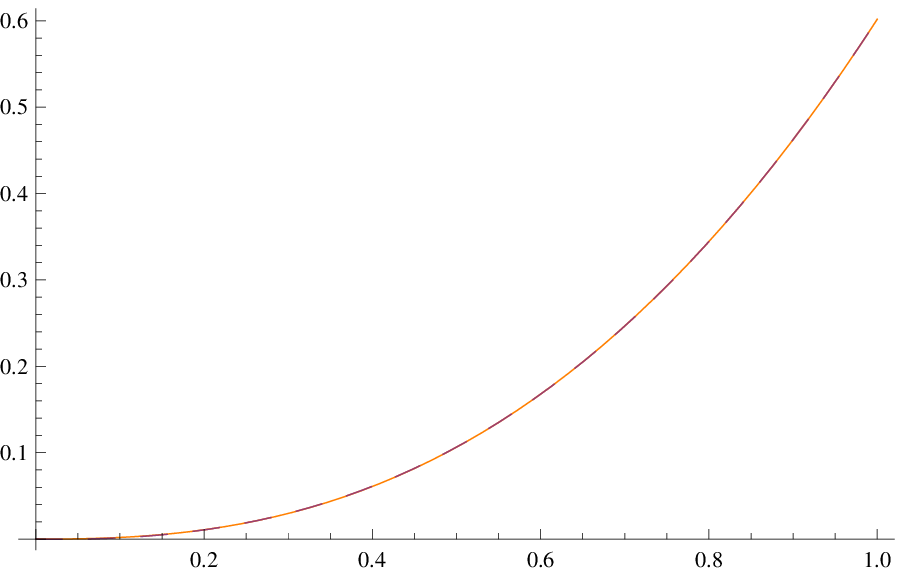}}
\qquad
\subfloat[$u(t)$, $N=4$]{\label{label:fig4}
\includegraphics[scale=0.77]{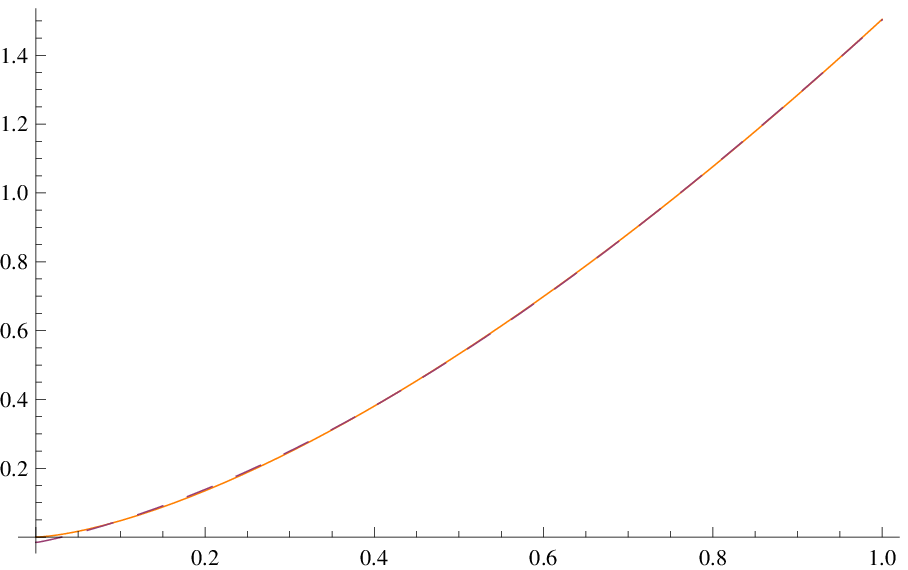}}\\
\begin{minipage}{0.95\textwidth}
\caption{{\small Exact solution $(x(t),u(t))$ to the FOCP \eqref{eq:J:OC:ex5}--\eqref{eq:J:OC:ex5:bc}
of Example~\ref{exam5} with $\alpha=0.5$ (the solid line) versus
our approximate solution with $N=3,4$ (the dashed line).}}
\label{fig7}
\end{minipage}
\end{center}
\end{figure}
\begin{table}[ht]
\begin{center}
\begin{minipage}{0.5\textwidth}
\caption{{\small Errors for problem \eqref{eq:J:OC:ex5}--\eqref{eq:J:OC:ex5:bc}
of Example~\ref{exam5} when computed with \eqref{error} for $\alpha=0.5$ and $N=2,3,4$.}}
\label{Table3.9}
\end{minipage}
\begin{tabular}{|c|c|c|c|c|c|c|}
\hline
Errors & $N=2$&$N=3$&$N=4$\\\hline
$Error\{x,x_N\}$ & $1.2\times10^{-4}$&$7.1\times10^{-7}$&$2.8\times10^{-8}$\\
$Error\{u,u_N\}$ & $7.1\times10^{-3}$&$9.6\times10^{-5}$&$7.9\times10^{-6}$ \\\hline
\end{tabular}
\end{center}
\end{table}
\end{example}


\begin{example} \cite{sal17}
\label{exam6}
Consider now the following FOCP:
\begin{equation}
\label{eq:J:OC:ex6}
\text{minimize}\quad J\{x,u\}=\int_0^1(u(t)-x(t))^2 dt,
\end{equation}
subject to the fractional dynamical control system
\begin{equation}
\label{eq:J:OC:ex6:cs}
x'(t)+{_0^CD_t^\alpha} x(t)=u(t)-x(t)+\frac{6t^{\alpha +2}}{\Gamma(\alpha +3)}+t^3
\end{equation}
and the boundary conditions
\begin{equation}
\label{eq:J:OC:ex6:bc}
x(0)=0, \quad x(1)=\frac{6}{\Gamma(4+\alpha)}.
\end{equation}
The exact solution to \eqref{eq:J:OC:ex6}--\eqref{eq:J:OC:ex6:bc} is given by
\begin{equation*}
(x(t),u(t))=\left(\frac{6t^{\alpha+3}}{\Gamma(\alpha +4)},
\frac{6t^{\alpha+3}}{\Gamma(\alpha +4)}\right).
\end{equation*}
To solve problem \eqref{eq:J:OC:ex6}--\eqref{eq:J:OC:ex6:bc} with our method, we take
\begin{equation*}
x_N(t)=\left(\frac{6}{\Gamma(4+\alpha)}
-\sum_{i=1}^N c_i\right)t+\sum_{i=1}^N c_i t^i.
\end{equation*}
The results for $\alpha=0.5$ and $N=3,4$ are plotted in Figure~\ref{fig8}.
The errors \eqref{error} for $\alpha=0.5$ and $N=2,3,4$
are shown in Table~\ref{Table3.10}.
\begin{figure}[!htb]
\begin{center}
\subfloat[$x(t)$, $N=3$]{\label{label:fig15}
\includegraphics[scale=0.77]{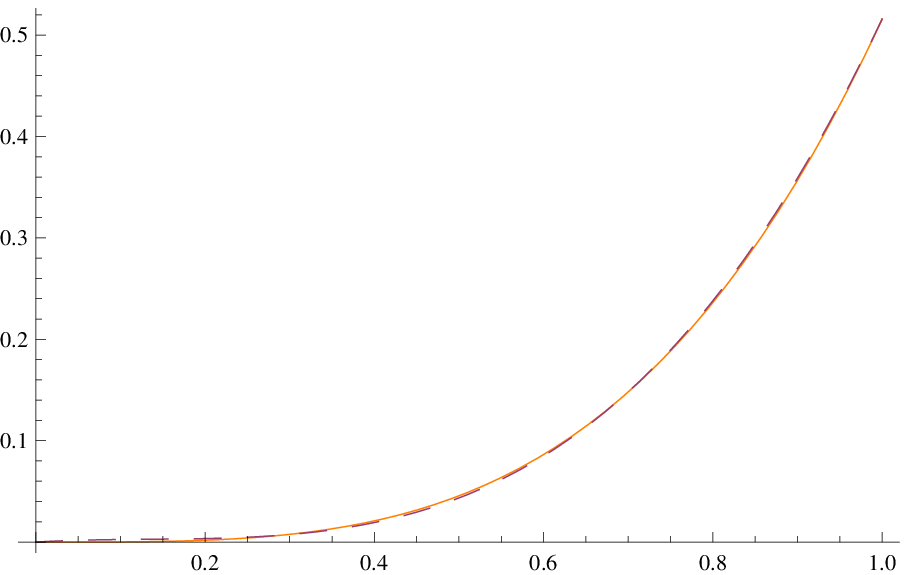}}
\qquad
\subfloat[$u(t)$, $N=3$]{\label{label:fig25}
\includegraphics[scale=0.77]{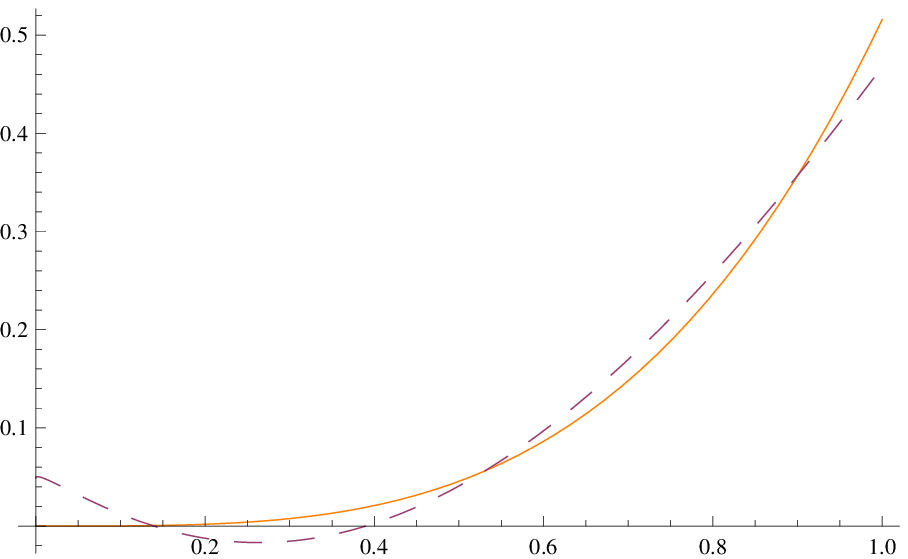}}\\
\subfloat[$x(t)$, $N=4$]{\label{label:fig35}
\includegraphics[scale=0.77]{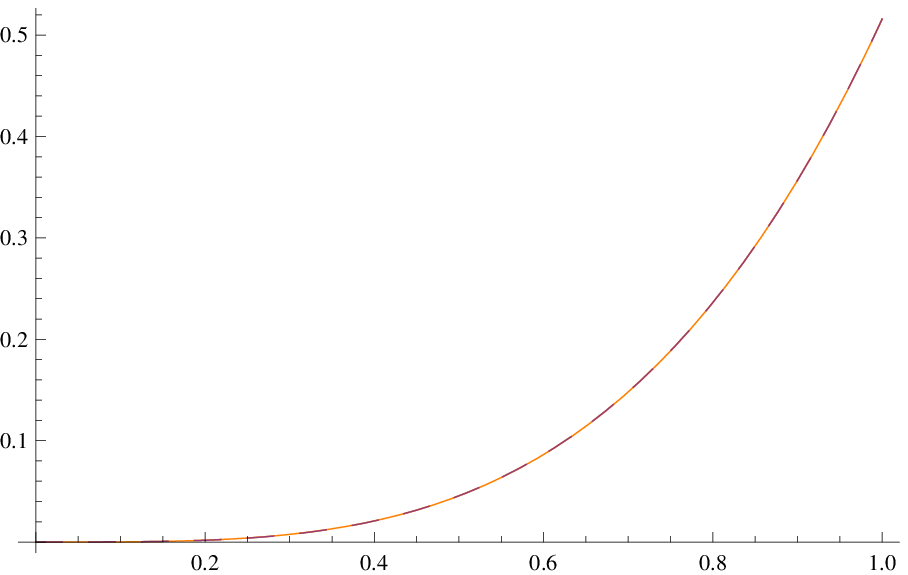}}
\qquad
\subfloat[$u(t)$, $N=4$]{\label{label:fig45}
\includegraphics[scale=0.77]{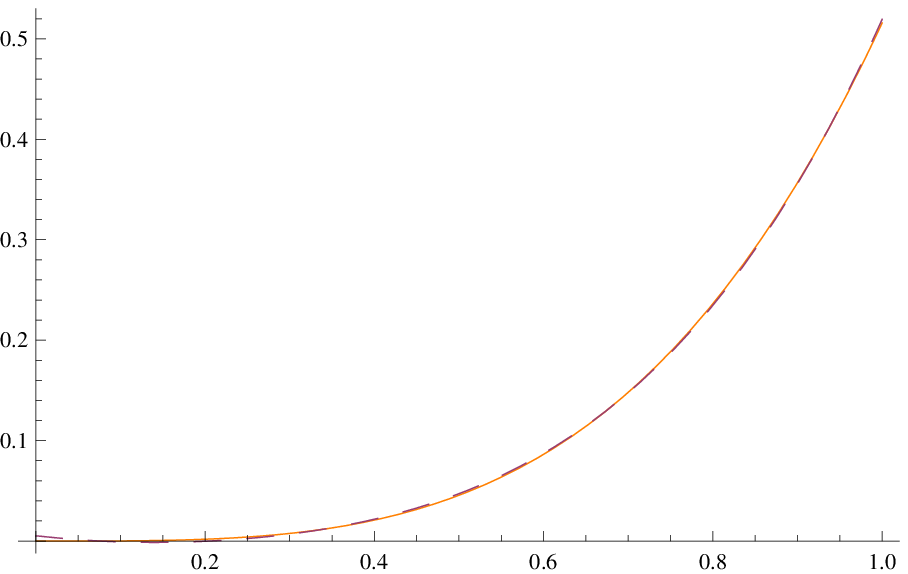}}\\
\begin{minipage}{0.95\textwidth}
\caption{{\small Exact solution $(x(t),u(t))$
to the FOCP \eqref{eq:J:OC:ex6}--\eqref{eq:J:OC:ex6:bc}
of Example~\ref{exam6} with $\alpha=0.5$ (the solid line) versus
our approximate solution with $N=3,4$ (the dashed line).}}
\label{fig8}
\end{minipage}
\end{center}
\end{figure}
\begin{table}[ht]
\begin{center}
\begin{minipage}{0.5\textwidth}
\caption{{\small Errors for problem \eqref{eq:J:OC:ex6}--\eqref{eq:J:OC:ex6:bc}
of Example~\ref{exam6} when computed with \eqref{error} for $\alpha=0.5$ and $N=2,3,4$.}}
\label{Table3.10}
\end{minipage}
\begin{tabular}{|c|c|c|c|c|c|c|}
\hline
Errors & $N=2$&$N=3$&$N=4$\\\hline
$Error\{x,x_N\}$ & $7.3\times 10^{-4}$&$2.5\times10^{-6}$&$8.4\times10^{-9}$\\
$Error\{u,u_N\}$ & $5.2\times 10^{-2}$&$3.9\times10^{-4}$&$2.2\times10^{-6}$ \\\hline
\end{tabular}
\end{center}
\end{table}
\end{example}

In \cite{MyID:294} the authors propose a direct method
for solving fractional optimal control problems, which involves
approximating the initial fractional order problem
by a new one with integer order derivatives only.
The latter problem is then discretized, by application of finite differences,
and solved numerically. Our method is simpler and does not involve
the solution of a nonlinear programming problem through AMPL and IPOPT.
Moreover, as we see next, it provides a much better result
when compared with the example given in \cite{MyID:294}.

\begin{example}\cite{MyID:294}
\label{exam11}
Consider the FOCP of \cite{MyID:294}:
\begin{equation}
\label{eq:exam11}
\mathrm{minimize}
\quad J\{x,u\}= \int_0^1 (u^2(t)- 4x(t))^2 dt,
\end{equation}
subject to the control system
\begin{equation}
\label{eq:exam12}
x'(t)+{_0^CD_t^{0.5}} x(t)=u(t)+ \frac{2}{\Gamma(2.5)} t^{1.5},
\quad t\in [0,1],
\end{equation}
and the boundary conditions
\begin{equation}
\label{eq:exam13}
x(0)=0, \quad \text{and} \quad x(1)=1.
\end{equation}
The exact solution to \eqref{eq:exam11}--\eqref{eq:exam13} is given by
\begin{equation*}
(x(t),u(t))= (t^2,2t)
\end{equation*}
(see \cite{MyID:294}). To solve problem \eqref{eq:exam11}--\eqref{eq:exam13}
with the method of this paper, we take
\begin{equation*}
x_N(t)= \left(1-\sum_{i=1}^N c_i\right)t+ \sum_{i=1}^N c_it^i.
\end{equation*}
In this case our method gives the exact solution
$x(t) = t^2$ for $N=2$. This is in contrast with
the results in \cite{MyID:294}, for which the exact
solution is not found. In fact, our method provides here
better results with just 2 steps ($c_1 = 0$ and $c_2 = 1$)
than the method introduced in \cite{MyID:294} with 100 steps.
\end{example}

\begin{remark}
According to relations \eqref{eq:yN:L} and \eqref{eq:yN:R},
accuracy depends on $N$. When the order $\alpha$
of the derivative changes, then the problem
under consideration also changes.
We were not able to find any general relation
between $\alpha$ and the accuracy, that is,
a general pattern on how $N$ changes with $\alpha$,
for a fixed precision. This seems to depend
on the particular situation at hand.
\end{remark}


\section{Conclusions}
\label{sec:conc}

We introduced a numerical method, based on Ritz's direct method,
to solve problems of the fractional calculus
of variations and fractional optimal control.
The idea of this approach is simple: by using some known basis functions,
we construct an approximate solution, which is a linear combination
of the basis functions, carrying out a finite-dimensional minimization
among such linear combinations and approximating the exact solution of the fractional
optimal control problem. From the simulation results, the proposed approach
is surprisingly accurate, and the approximate solution is nearly a perfect match
with the exact solution, which is superior to the results from previous
numerical methods available in the literature.


\begin{acknowledgements}
This work is part of first author's PhD project.
It was partially supported by Islamic Azad University, Tehran, Iran;
and CIDMA-FCT, Portugal, within project UID/MAT/04106/2013.
Jahanshahi was also supported by a scholarship from the Ministry of Science,
Research and Technology of the Islamic Republic of Iran,
to visit the University of Aveiro, Portugal, and work with Professor Torres.
The hospitality and the excellent working conditions
at the University of Aveiro are here gratefully acknowledged.
The authors are indebted to an anonymous referee for a
careful reading of the original manuscript and for providing
several suggestions, questions, and remarks. They are also
grateful to the Editor-in-Chief, Professor Giannessi,
and Ryan Loxton, for English improvements.
\end{acknowledgements}



\end{document}